\newcommand\x{\mathbf{x}}
\newcommand\R{\mathbb{R}}
\newcommand\N{\mathbb{N}}
\renewcommand\div{\text{div}}
\renewcommand\d{\mathrm{d}}
\newtheorem{remark}{Remark}
\newtheorem{theorem}{Theorem}
\title{Dirichlet-Neumann and Neumann-Neumann Methods for Elliptic Control Problems}
\author{Martin Jakob Gander$^{1}$, Liu-Di LU$^{1}$}
\date
{%
\noindent{\small\textit{$^1$Section of Mathematics, University of Geneva, Rue du Conseil Général 7-9, 1205 Geneva, Switzerland}}\\
}
\begin{document}

\maketitle

\begin{abstract} 
We present the Dirichlet-Neumann (DN) and Neumann-Neumann (NN) methods applied to the optimal control problems arising from elliptic partial differential equations (PDEs) under the $H^{-1}$ regularization. We use the Lagrange multiplier approach to derive a forward-backward optimality system with the $L^2$ regularization, and a singular perturbed Poisson equation with the $H^{-1}$ regularization. The $H^{-1}$ regularization thus avoids solving a coupled bi-Laplacian problem, yet the solutions are less regular. The singular perturbed Poisson equation is then solved by using the DN and NN methods, and a detailed analysis is given both in the one-dimensional and two-dimensional case. Finally, we provide some numerical experiments with conclusions.
\end{abstract}

\section{Introduction}\label{sec:1}
Consider the state $y(\x)$ governed by the elliptic partial differential equation (PDE)
\begin{equation}\label{eq:poisson}
    -\div\left(\kappa(\x)\nabla y(\x)\right) = u(\x), \quad \x\in\Omega, \qquad y(\x)=0, \quad \x\in\partial\Omega,
\end{equation}
where $\Omega\subset\R^n$, $n=1,2,3$ is a bounded domain and $\partial\Omega$ its boundary. Here $u$ is a control variable from an admissible set $U_{\text{ad}}$, which drives the state $y$ to a target state $\hat y$. Problem~\eqref{eq:poisson} originates from the stationary heat conduction equation. In this setting, $\kappa(\x)$ denotes the thermal conductivity of $\Omega$, $y(\x)$ is the temperature at a particular position $\x$ and $u(\x)$ represents a controlled heat source. The goal is to find the optimal control variable $u^*$ which minimizes the cost functional for $\nu\in\R^+$,
\begin{equation}\label{eq:J}
    J(y,u) = \frac12\int_{\Omega} \left|y(\x) -\hat y(\x)\right|^2 \d \x + \frac{\nu}2\|u\|^2_{U_{\text{ad}}},
\end{equation}
subject to the constraint~\eqref{eq:poisson}. The term $\frac{\nu}2\|u\|^2_{U_{\text{ad}}}$ can be considered as the cost of applying such a control $u$. It is said that the control is expensive if $\nu$ is large. From a mathematical viewpoint, the presence of this term with $\nu\in\R^+$ has a regularizing effect on the optimal control.

The analysis of Domain Decomposition methods (DDMs) for the elliptic PDE~\eqref{eq:poisson} is well established, see for instance~\cite{Smith1996}. Much less is known for DD methods applied to PDE-constrained optimal control problems, see for instance~\cite{Gander2018,Heinkenschloss2005}. Although the admissible set $U_{\text{ad}}$ is often considered as $L^2(\Omega)$ for such elliptic control problems, a recent study shows that the energy space $H^{-1}(\Omega)$ can also be used for the regularization~\cite{Neumuller2021}. Moreover, this space can be expanded with $L^2(0,T;H^{-1}(\Omega))$ to treat parabolic control problems~\cite{Langer2021}. From an analytical point of view, the first-order optimality system can be simplified to a Poisson type equation by using the energy space $H^{-1}(\Omega)$, whereas a biharmonic type problem still needs to be treated for the usual $L^2(\Omega)$ regularization. Moreover, applications of the energy norm can also be found in electrical engineering, fluid mechanics~\cite{Lin2011}, etc.

Inspired by this approach, we study in this paper DDMs applied to the optimal control problem~\eqref{eq:poisson}-\eqref{eq:J} using the energy norm. More precisely, we introduce in Section~\ref{sec:2} the use of the energy norm $H^{-1}$ for the elliptic control problem, and compare the optimality system with that of the $L^2$ norm. Although we consider for simplicity an unconstrained control, this can be extended to problems with state or control constraints, see also~\cite{Troltzsch2010}. We then provide in Section~\ref{sec:3} a convergence analysis of the Dirichlet-Neumann (DN)~\cite{Bjorstad1986} and the Neumann-Neumann (NN)~\cite{Bourgat1989} methods applied to the optimality system. Some numerical experiments are given in Section~\ref{sec:4}, where we conclude with some comments.

\section{Regularization: \texorpdfstring{$L^2$}{Lg} vs \texorpdfstring{$H^{-1}$}{Lg}}\label{sec:2}
We assume that both the control $u$ and the target state $\hat y$ are in $L^2(\Omega)$, and consider first $U_{\text{ad}}=L^2(\Omega)$ as the set of all feasible controls. Using the Lagrange multiplier approach~\cite{Troltzsch2010}, we get for the first-order optimality system for problem~\eqref{eq:poisson}-\eqref{eq:J}
\begin{equation}\label{eq:opt_sys1}
    \begin{aligned}
        -\div\left(\kappa(\x)\nabla y(\x)\right) &= u(\x), \quad \x\in\Omega, &&\quad y(\x)=0, \quad \x\in\partial\Omega,\\
        -\div\left(\kappa(\x)\nabla p(\x)\right) &= y(\x) - \hat y(\x), \quad \x\in\Omega, &&\quad p(\x)=0, \quad \x\in\partial\Omega,\\
        p(\x) + \nu u(\x) &= 0, \quad \x\in\Omega,
    \end{aligned}
\end{equation}
where $p$ is the Lagrange multiplier (or adjoint state). Inserting the third equation of~\eqref{eq:opt_sys1} into the first equation, and the result into the second equation, we can rewrite the optimality system~\eqref{eq:opt_sys1} with one single variable, for instance, with respect to the state variable $y$ as
\begin{equation}\label{eq:opt_sys_red1}
    \begin{aligned}
        \nu\div\Big(\kappa(\x)\nabla\big( \div\left(\kappa(\x)\nabla y(\x)\right)\big)\Big) + y(\x) &= \hat y(\x), \quad &&\x\in\Omega, \\
        \quad \div\left(\kappa(\x)\nabla y(\x)\right) = y(\x)&=0, \quad &&\x\in\partial\Omega.
    \end{aligned}
\end{equation}
In particular, we identify in~\eqref{eq:opt_sys_red1} a biharmonic operator by taking the conductivity $\kappa(\x)=1$ everywhere over the domain.

We consider now $U_{\text{ad}}=H^{-1}(\Omega)$ in~\eqref{eq:J} as the set of all feasible controls. As proposed in~\cite{Neumuller2021}, 
we can define the norm in $H^{-1}(\Omega)$ by
\begin{equation}\label{eq:norm}
    \|u\|_{H^{-1}(\Omega)}^2:= \|\sqrt{\kappa}\nabla y\|_{L^2(\Omega)}^2,
\end{equation}
which is the energy norm. Note that the conductivity $\kappa$ is positive. On the other hand, following the same reasoning as in the $L^2(\Omega)$ case to derive the optimality system,  
we obtain
\begin{equation}\label{eq:opt_sys_red2}
    -\nu\div\left(\kappa(\x)\nabla y(\x)\right) + y(\x) = \hat y(\x), \quad \x\in\Omega, \qquad y(\x) = 0, \quad \x\in\partial\Omega.
\end{equation}
Comparing~\eqref{eq:opt_sys_red2} with the reduced optimality system under $L^2$ regularization~\eqref{eq:opt_sys_red1}, we observe that indeed only a Laplace type operator needs to be solved in~\eqref{eq:opt_sys_red2}.

\begin{remark}
    We need to be careful when comparing solutions of the two reduced optimality systems~\eqref{eq:opt_sys_red1} and~\eqref{eq:opt_sys_red2}, since we penalize the control in different norms and solve different equations. In the $L^2$ case, the control can be determined by $u = -\frac1{\nu}p$ which is proportional to the adjoint state variable, while it is proportional to the state variable in the $H^{-1}$ case, since $u=\frac1{\nu}(\hat y-y)$. Furthermore, the solution is less regular in the $H^{-1}$ case as shown in~\cite{Neumuller2021}. 
\end{remark}

\begin{remark}
    Depending on the value of $\nu$, ~\eqref{eq:opt_sys_red2} is a singularly perturbed PDE. Standard numerical methods can perform poorly, we refer to the monograph~\cite{Roos2008} for a review of robust numerical methods for such problems. In the recent work~\cite{Langer2022}, the authors use an algebraic multigrid method and a balancing domain decomposition by constraints preconditioner for a finite element discretization to treat the problem~\eqref{eq:opt_sys_red2}. They observed that optimal convergence is ensured with $\nu=h^2$, $h$ being the mesh size.  
\end{remark}

\section{Convergence Analysis of DD methods}\label{sec:3}

We now provide a convergence analysis for the DN and the NN methods applied to solve the reduced optimality system~\eqref{eq:opt_sys_red2}, and then compare with DN and NN methods applied to~\eqref{eq:opt_sys_red1} from~\cite{Gander2018}. 

Without loss of generality, the analysis is given under the assumption that the target state $\hat y=0$, meaning that we focus on the error equation related to~\eqref{eq:opt_sys_red2}. Moreover, we assume that the conductivity coefficient $\kappa(x)=1$ everywhere over the domain for the following analysis, although the DN and NN methods are defined for a general  $\kappa(x)$. Let us first consider the one-dimensional case with the domain $\Omega=(0,1)$. We decompose it into two non-overlapping subdomains $\Omega_1=(0,\alpha)$ and $\Omega_2=(\alpha,1)$ with $\alpha$ the interface. We denote by $e_i$ the error in domain $\Omega_i$ for $i=1,2$.

For the DN method, the error equations for~\eqref{eq:opt_sys_red2} are for iteration index $n=1,2,\ldots$,
\begin{equation}\label{eq:DN}
    \begin{aligned}
        & \partial_{xx} e_1^n - \nu^{-1}e_1^n = 0, \quad e_1^n(0)=0, \quad e_1^n(\alpha)=e_{\alpha}^{n-1},\\
        & \partial_{xx} e_2^n - \nu^{-1}e_2^n = 0, \quad e_2^n(1)=0, \quad \partial_x e_2^n(\alpha)=\partial_x e_1^n(\alpha),
    \end{aligned}
\end{equation}
with $e_{\alpha}^n:=(1-\theta)e_{\alpha}^{n-1} + \theta e_2^n(\alpha)$ and $\theta\in(0,1)$ a relaxation parameter. 
We notice that the error equations~\eqref{eq:DN} are similar to the ones in~\cite[Equation (2.4)]{Gander2016} for applying the Dirichlet-Neumann waveform relaxation (DNWR) method to the heat equation. Indeed, after a Laplace transform, the error equations for the DNWR method in the one dimensional case are like~\eqref{eq:DN}, where $\nu^{-1}$ is replaced by $s$. For this reason, we follow the same calculations as in~\cite{Gander2016} and find the convergence factor
\begin{equation}\label{eq:cv_DN}
    \rho_{\text{DN}}:=\left|1-\theta\left[1+\tanh\left(\sqrt{\nu^{-1}} (1-\alpha)\right)\coth\left(\sqrt{\nu^{-1}} \alpha\right)\right]\right|.
\end{equation}
This leads us to the following convergence results.

\begin{theorem}
    The DN method with $\theta=1$ applied to Problem~\eqref{eq:opt_sys_red2} converges if and only if the interface is closer to the right boundary (i.e., $\alpha > \frac12$).
\end{theorem}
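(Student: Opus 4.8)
The plan is to specialize the convergence factor \eqref{eq:cv_DN} to $\theta = 1$ and then reduce the convergence requirement $\rho_{\text{DN}} < 1$ to a single inequality that can be settled by the monotonicity of $\tanh$. Setting $\theta = 1$ in \eqref{eq:cv_DN} and writing $\mu := \sqrt{\nu^{-1}} > 0$, the two leading terms cancel and we are left with
\begin{equation*}
    \rho_{\text{DN}} = \tanh\!\left(\mu(1-\alpha)\right)\coth\!\left(\mu\alpha\right),
\end{equation*}
where the absolute value may be dropped because both factors are strictly positive for $\alpha \in (0,1)$ and $\mu > 0$.

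Before using this, I would confirm that $\rho_{\text{DN}}$ is indeed the exact contraction factor of the interface iteration. Unlike the DNWR analysis of \cite{Gander2016}, here $\nu^{-1}$ is a genuine constant rather than a Laplace variable, so the one-dimensional error equations \eqref{eq:DN} form a scalar recursion: solving the Dirichlet subproblem on $\Omega_1$ gives $e_1^n(x) = e_{\alpha}^{n-1}\sinh(\mu x)/\sinh(\mu\alpha)$, hence $\partial_x e_1^n(\alpha) = \mu\coth(\mu\alpha)\,e_{\alpha}^{n-1}$; feeding this flux into the Neumann subproblem on $\Omega_2$ with the ansatz $e_2^n(x) = D\sinh(\mu(1-x))$ and matching at the interface yields $e_2^n(\alpha) = -\tanh(\mu(1-\alpha))\coth(\mu\alpha)\,e_{\alpha}^{n-1}$. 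With $\theta = 1$ the update reduces to $e_{\alpha}^n = e_2^n(\alpha)$, so $e_{\alpha}^n = -\rho_{\text{DN}}\,e_{\alpha}^{n-1}$ and therefore $|e_{\alpha}^n| = \rho_{\text{DN}}^n\,|e_{\alpha}^0|$. Consequently the method converges if and only if $\rho_{\text{DN}} < 1$.

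It then remains to characterize $\rho_{\text{DN}} < 1$. Dividing by the strictly positive quantity $\coth(\mu\alpha)$, the inequality $\tanh(\mu(1-\alpha))\coth(\mu\alpha) < 1$ is equivalent to $\tanh(\mu(1-\alpha)) < \tanh(\mu\alpha)$. Since $\tanh$ is strictly increasing on $\R$ and $\mu > 0$, this holds if and only if $\mu(1-\alpha) < \mu\alpha$, i.e. $1 - \alpha < \alpha$, i.e. $\alpha > \tfrac12$. For the converse one checks that $\alpha = \tfrac12$ gives $\rho_{\text{DN}} = 1$ (stagnation, no convergence) and $\alpha < \tfrac12$ gives $\rho_{\text{DN}} > 1$ (divergence), which establishes the ``only if'' direction and completes the equivalence.

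I expect no serious obstacle, since the argument is almost entirely algebraic once $\theta = 1$ collapses the two competing terms in \eqref{eq:cv_DN}. The only point requiring care is the logical bookkeeping of the ``if and only if'', in particular isolating the marginal case $\alpha = \tfrac12$ where $\rho_{\text{DN}} = 1$ and the iteration neither contracts nor diverges, so that it must be excluded from the convergent regime.
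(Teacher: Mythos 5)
Your proposal is correct and follows essentially the same route as the paper: set $\theta=1$ in \eqref{eq:cv_DN} to get $\rho_{\text{DN}}=\tanh\left(\sqrt{\nu^{-1}}(1-\alpha)\right)\coth\left(\sqrt{\nu^{-1}}\alpha\right)$ and show this is below $1$ exactly when $\alpha>\frac12$. Your reduction via the strict monotonicity of $\tanh$ is equivalent to (and arguably cleaner than) the paper's study of $f(x)=\sinh(1-x)\cosh(x)-\cosh(1-x)\sinh(x)=\sinh(1-2x)$, and your explicit verification that $\rho_{\text{DN}}$ is the exact one-step contraction factor is a welcome addition the paper leaves implicit.
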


\begin{proof}
    Taking $\theta=1$ in~\eqref{eq:cv_DN}, we obtain the convergence factor 
    \[\rho_{\text{DN}}=\tanh\left(\sqrt{\nu^{-1}} (1-\alpha)\right)\coth\left(\sqrt{\nu^{-1}} \alpha\right),\]
    that is smaller than 1 if and only if $\alpha > \frac12$ which can be seen by studying the function $f(x)=\sinh(1-x)\cosh(x) - \cosh(1-x)\sinh(x)$ for $x\in[0,1]$.
\end{proof}

\begin{theorem}
    For symmetric subdomains (i.e., $\alpha=\frac12$), the convergence of the DN method for Problem~\eqref{eq:opt_sys_red2} is linear and is independent of the value of the regularization parameter $\nu$. It converges in two iterations if $\theta=\frac12$.
\end{theorem}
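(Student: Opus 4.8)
The plan is to evaluate the convergence factor~\eqref{eq:cv_DN} at $\alpha=\tfrac12$ and to exploit a hyperbolic identity. Setting $\alpha=\tfrac12$ makes the two arguments equal, $\sqrt{\nu^{-1}}(1-\alpha)=\sqrt{\nu^{-1}}\alpha=\tfrac12\sqrt{\nu^{-1}}$, so that the product in the bracket collapses via $\tanh(z)\coth(z)=1$, valid for every $z>0$. The crucial point is that this cancellation removes all dependence on the common argument $\tfrac12\sqrt{\nu^{-1}}$, and therefore on $\nu$, leaving the bracket equal to the constant $2$.

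With this simplification, $\rho_{\text{DN}}=\lvert 1-2\theta\rvert$, which I would record as the symmetric-subdomain convergence factor. Since this quantity does not involve $\nu$ at all, it proves the claimed $\nu$-independence; and since it is a fixed constant strictly below $1$ for every $\theta\in(0,1)$, the interface error satisfies $\lvert e_{\alpha}^{n}\rvert=\lvert 1-2\theta\rvert^{\,n}\lvert e_{\alpha}^{0}\rvert$, establishing linear (geometric) convergence with a rate that is uniform in $\nu$.

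For the last assertion I would set $\theta=\tfrac12$, giving $\rho_{\text{DN}}=0$ and hence $e_{\alpha}^{1}=0$ after a single relaxation step. The subtle part, and what I expect to be the main obstacle to state cleanly, is that a vanishing convergence factor does not by itself mean the error is zero everywhere after one iteration: the subdomain errors $e_1^1,e_2^1$ are generally still nonzero, and only the updated interface value has been annihilated. To close the argument I would feed $e_{\alpha}^{1}=0$ into iteration $n=2$. The Dirichlet solve in $\Omega_1$ then has homogeneous data $e_1^2(0)=e_1^2(\alpha)=0$, and since $\partial_{xx}e_1^2-\nu^{-1}e_1^2=0$ admits only the trivial solution under homogeneous Dirichlet conditions, $e_1^2\equiv 0$; consequently $\partial_x e_1^2(\alpha)=0$, so the Neumann solve in $\Omega_2$ inherits homogeneous flux and boundary data and likewise yields $e_2^2\equiv 0$. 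Thus both subdomain errors vanish at $n=2$, which proves convergence in two iterations.
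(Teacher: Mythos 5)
Your proof is correct and follows essentially the same route as the paper: substitute $\alpha=\tfrac12$ into~\eqref{eq:cv_DN}, use $\tanh(z)\coth(z)=1$ to collapse the bracket to $2$, and read off $\rho_{\text{DN}}=|1-2\theta|$. The only difference is that you additionally spell out why a vanishing convergence factor yields the exact solution at iteration $n=2$ rather than $n=1$ — a detail the paper's one-line proof leaves implicit — which is a welcome clarification but not a different argument.
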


\begin{proof}
    We just have to take $\alpha=\frac12$ in~\eqref{eq:cv_DN} and finds $\rho_{\text{DN}}=|1-2\theta|$. 
\end{proof}

\begin{theorem}\label{thm:DN_asym}
    For asymmetric subdomains (i.e., $\alpha\neq\frac12$), the DN method converges for Problem~\eqref{eq:opt_sys_red2} if and only if
    \begin{equation}\label{eq:DN_theta}
        0<\theta<2\theta_{\text{DN}}^{\star}, \quad  \theta_{\text{DN}}^{\star}:=\frac1{1+\tanh\left(\sqrt{\nu^{-1}} (1-\alpha)\right)\coth\left(\sqrt{\nu^{-1}} \alpha\right)}.
    \end{equation}
    Moreover, it converges in two iterations if and only if $\theta=\theta_{\text{DN}}^{\star}$.
\end{theorem}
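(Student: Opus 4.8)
The plan is to read off everything from the closed form of the convergence factor $\rho_{\text{DN}}$ in~\eqref{eq:cv_DN}. First I would set $C:=1+\tanh(\sqrt{\nu^{-1}}(1-\alpha))\coth(\sqrt{\nu^{-1}}\alpha)$, so that $\rho_{\text{DN}}=|1-\theta C|$ and $\theta_{\text{DN}}^{\star}=1/C$. The first thing to establish is that $C>0$: since $0<\alpha<1$, both arguments $\sqrt{\nu^{-1}}(1-\alpha)$ and $\sqrt{\nu^{-1}}\alpha$ are strictly positive, so $\tanh$ and $\coth$ of them are strictly positive, whence in fact $C>1$. This guarantees that $\theta_{\text{DN}}^{\star}$ is well defined and positive, and that dividing an inequality by $C$ preserves its direction.

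The interface iteration underlying~\eqref{eq:cv_DN} is the scalar linear recursion $e_{\alpha}^n=(1-\theta C)e_{\alpha}^{n-1}$, so that $e_{\alpha}^n=(1-\theta C)^n e_{\alpha}^0$; the method converges for every initial guess exactly when the amplification factor satisfies $|1-\theta C|<1$. I would then solve this inequality directly: $|1-\theta C|<1$ is equivalent to $-1<1-\theta C<1$, i.e. $0<\theta C<2$, i.e. $0<\theta<2/C$. Substituting $2/C=2\theta_{\text{DN}}^{\star}$ gives precisely the claimed interval $0<\theta<2\theta_{\text{DN}}^{\star}$, which proves the first equivalence.

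For the finite-step statement I would use that exact convergence in a fixed number of steps for arbitrary initial data forces the amplification factor to vanish, that is $\rho_{\text{DN}}=0$, which holds if and only if $1-\theta C=0$, i.e. $\theta=1/C=\theta_{\text{DN}}^{\star}$. The remaining point, and the only place requiring a little care, is to explain why $\theta=\theta_{\text{DN}}^{\star}$ yields convergence in \emph{two} iterations rather than one. The resolution is that $\rho_{\text{DN}}=0$ makes the interface trace vanish only after the first update, $e_{\alpha}^1=0$; feeding $e_{\alpha}^1=0$ into the next sweep, the Dirichlet solve on $\Omega_1$ returns $e_1^2\equiv 0$ (a homogeneous ODE with homogeneous boundary data), hence zero flux at $\alpha$, and the subsequent Neumann solve on $\Omega_2$ with $e_2^2(1)=0$ and $\partial_x e_2^2(\alpha)=0$ then returns $e_2^2\equiv 0$. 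Thus both subdomain errors vanish only at iteration $n=2$, which establishes the two-iteration claim and completes the proof.
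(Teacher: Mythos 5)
Your proof is correct and follows essentially the same route as the paper: both reduce the statement to the scalar amplification factor $|1-\theta C|$ with $C=1+\tanh(\sqrt{\nu^{-1}}(1-\alpha))\coth(\sqrt{\nu^{-1}}\alpha)>0$, obtain the admissible interval from $0<\theta C<2$, and get $\theta_{\text{DN}}^{\star}$ by setting the factor to zero. Your closing paragraph explaining why the vanishing of $e_{\alpha}^{1}$ yields zero subdomain errors only at the second sweep is a detail the paper leaves implicit, but it is an elaboration of the same argument rather than a different approach.
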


\begin{proof}
    From the convergence factor~\eqref{eq:cv_DN}, the interior part of the absolute value is smaller than 1, since $\theta\in(0,1)$ and $1+\tanh\left(\sqrt{\nu^{-1}} (1-\alpha)\right)\coth\left(\sqrt{\nu^{-1}} \alpha\right)$ is strictly positive. We then just need to ensure that
    \[\theta\left[1+\tanh\left(\sqrt{\nu^{-1}} (1-\alpha)\right)\coth\left(\sqrt{\nu^{-1}} \alpha\right)\right]<2,\]
    which leads to the inequality in~\eqref{eq:DN_theta}. On the other hand, we find directly $\theta_{\text{DN}}^{\star}$ by equating~\eqref{eq:cv_DN} to zero.
\end{proof}

\begin{remark}
    As expected, we find similar results in the symmetric case as for the $L^2$ regularization. However, we have an optimal relaxation parameter for asymmetric decompositions, which is strictly smaller than 1, whereas a pair of parameters is needed for the $L^2$ regularization which can be greater than one in some cases, see~\cite{Gander2018}. This is due to the fact that two transmission conditions need to be considered for a biharmonic type problem.
\end{remark}

The error equations for the NN method, for iteration index $n=1,2,\cdots$, are
\begin{equation}\label{eq:NN_e}
    \partial_{xx} e_j^n - \nu^{-1}e_j^n = 0, \quad e_1^n(0)=0, \quad e_2^n(1)=0, \quad e_j^n(\alpha)=e_{\alpha}^{n-1}, 
\end{equation}
where the transmission condition is given by $e_{\alpha}^n:=e_{\alpha}^{n-1} - \theta\big(\psi_1^n(\alpha)+\psi_2^n(\alpha)\big)$ and $\psi_j^n$ satisfies the correction step
\begin{equation}\label{eq:NN_e_cor}
    \partial_{xx} \psi_j^n - \nu^{-1}\psi_j^n = 0, \quad \psi_1^n(0)=0, \quad \psi_2^n(1)=0, \quad \partial_{n_j} \psi_j^n(\alpha) = \partial_{n_1} e_1^n(\alpha) + \partial_{n_2} e_2^n(\alpha). 
\end{equation}
Solving~\eqref{eq:NN_e}-\eqref{eq:NN_e_cor} on each domain $\Omega_j$ and applying the boundary conditions at $x=0$ and $x=1$, we find the solutions with $A^n, B^n, C^n, D^n$ four coefficients to be determined for $e_1^n, e_2^n, \psi_1^n$ and $\psi_2^n$. Evaluating then $e_j^n$ at $x=\alpha$, and using the transmission condition $e_j^n(\alpha)=e_{\alpha}^{n-1}$, we can determine the two coefficients $A^n, B^n$ and get
\begin{equation}\label{eq:e_bis}
    e_1^n(x) = e_{\alpha}^{n-1}\frac{\sinh(\sqrt{\nu^{-1}} x)}{\sinh(\sqrt{\nu^{-1}} \alpha)}, \quad e_2^n(x) = e_{\alpha}^{n-1}\frac{\sinh\left(\sqrt{\nu^{-1}} (1-x)\right)}{\sinh\left(\sqrt{\nu^{-1}} (1-\alpha)\right) }.
\end{equation}
Similarly, we evaluate $\partial_{n_j} \psi_j^n$ at $x=\alpha$, and using the transmission condition $\partial_{n_j} \psi_j^n(\alpha) = \partial_{n_1} e_1^n(\alpha) + \partial_{n_2} e_2^n(\alpha)$ with the help of~\eqref{eq:e_bis}, we can determine the remaining two coefficients $C^n, D^n$ and get,
\[
    \begin{aligned}
        \psi_1^n(x) &= e_{\alpha}^{n-1}\frac{\sinh(\sqrt{\nu^{-1}} x)}{\cosh(\sqrt{\nu^{-1}} \alpha)} \left( \coth(\sqrt{\nu^{-1}} \alpha) + \coth(\sqrt{\nu^{-1}} (1-\alpha)) \right),\\
        \psi_2^n(x) &= e_{\alpha}^{n-1}\frac{\sinh\left(\sqrt{\nu^{-1}} (1-x)\right)}{\cosh(\sqrt{\nu^{-1}} (1-\alpha))} \left( \coth(\sqrt{\nu^{-1}} \alpha) + \coth(\sqrt{\nu^{-1}} (1-\alpha)) \right).
    \end{aligned}
\]
Using finally the definition of the transmission condition $e_{\alpha}^n$, we find the convergence factor
\begin{equation}\label{eq:cv_NN}
    \begin{aligned}
        \rho_{\text{NN}}:=\Big|1-\theta&\Big( \tanh(\sqrt{\nu^{-1}} \alpha) + \tanh\left(\sqrt{\nu^{-1}} (1-\alpha)\right) \Big) \\
        &\times \Big( \coth(\sqrt{\nu^{-1}} \alpha) + \coth(\sqrt{\nu^{-1}} (1-\alpha)) \Big)\Big|.
    \end{aligned}
\end{equation}
We obtain the following convergence results.

\begin{theorem}
    For symmetric subdomains (i.e., $\alpha=\frac12$), the convergence of the NN method for Problem~\eqref{eq:opt_sys_red2} is linear and is independent of the value of the regularization parameter $\nu$. It converges in two iterations if $\theta=\frac14$.
\end{theorem}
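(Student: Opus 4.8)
The plan is to mirror the short computation used for the symmetric DN case: substitute $\alpha=\tfrac12$ directly into the NN convergence factor~\eqref{eq:cv_NN} and simplify. Writing $s:=\sqrt{\nu^{-1}}$, the symmetry $\alpha=1-\alpha=\tfrac12$ makes the two hyperbolic arguments coincide, so the first bracket collapses to $2\tanh(s/2)$ and the second to $2\coth(s/2)$.

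The key observation is then the elementary identity $\tanh(s/2)\coth(s/2)=1$, valid for every $s>0$. Hence the product of the two brackets equals exactly $4$, with no residual dependence on $s$, and therefore none on $\nu$. Substituting back gives
\[
  \rho_{\text{NN}} = |1-4\theta|,
\]
which is a constant strictly less than $1$ for $\theta\in(0,\tfrac12)$; this establishes linear convergence with a contraction rate that is independent of the regularization parameter $\nu$.

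For the two-iteration claim, set $\theta=\tfrac14$, so that $\rho_{\text{NN}}=0$. Since the convergence factor relates consecutive interface errors through $e_\alpha^{n}=\pm\rho_{\text{NN}}\,e_\alpha^{n-1}$, a vanishing factor forces $e_\alpha^{1}=0$ after the first update. Feeding $e_\alpha^{1}=0$ as the interface data into the subdomain solutions~\eqref{eq:e_bis} at the next step yields $e_1^{2}\equiv e_2^{2}\equiv 0$, i.e. the error is annihilated throughout the domain after two iterations.

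There is essentially no analytic obstacle here: all the work is done by the cancellation $\tanh\cdot\coth=1$, which is precisely what removes the $\nu$-dependence. The only point deserving care is interpreting $\rho_{\text{NN}}=0$ correctly --- it kills the interface error in a single update, but one further sweep is needed to propagate the zero interface data into the subdomain interiors, which is why the sharp statement is convergence in \emph{two} iterations rather than one.
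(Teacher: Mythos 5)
Your proposal is correct and follows exactly the paper's own (one-line) argument: substitute $\alpha=\tfrac12$ into the convergence factor~\eqref{eq:cv_NN}, use $\tanh\cdot\coth=1$ to get $\rho_{\text{NN}}=|1-4\theta|$, and read off the two claims. You simply spell out the cancellation and the two-iteration interpretation that the paper leaves implicit.
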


\begin{proof}
    We just have to take $\alpha=\frac12$ in~\eqref{eq:cv_NN} and find $\rho_{\text{NN}}=|1-4\theta|$. 
\end{proof}

\begin{theorem}\label{thm:NN_asym}
    For asymmetric subdomains (i.e., $\alpha\neq\frac12$), the NN method converges for Problem~\eqref{eq:opt_sys_red2} if and only if
    \begin{equation}\label{eq:NN_theta}
        0<\theta<2\theta_{\text{NN}}^{\star}, \quad \textstyle \theta_{\text{NN}}^{\star}:=\frac1{\Big( \tanh(\sqrt{\nu^{-1}} \alpha) + \tanh\left(\sqrt{\nu^{-1}} (1-\alpha)\right) \Big) \Big( \coth(\sqrt{\nu^{-1}} \alpha) + \coth(\sqrt{\nu^{-1}} (1-\alpha)) \Big)}.
    \end{equation}
    Furthermore, it converges in two iterations if and only if $\theta=\theta_{\text{NN}}^{\star}$.
\end{theorem}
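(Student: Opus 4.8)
The plan is to follow the same strategy as in the proof of Theorem~\ref{thm:DN_asym}, working directly from the convergence factor~\eqref{eq:cv_NN} rather than re-deriving anything. First I would abbreviate the bracketed product as $S:=\big(\tanh(\sqrt{\nu^{-1}}\alpha)+\tanh(\sqrt{\nu^{-1}}(1-\alpha))\big)\big(\coth(\sqrt{\nu^{-1}}\alpha)+\coth(\sqrt{\nu^{-1}}(1-\alpha))\big)$, so that $\rho_{\text{NN}}=|1-\theta S|$ and the claimed threshold is simply $\theta_{\text{NN}}^{\star}=1/S$. The entire statement then reduces to analysing the scalar affine recursion $e_\alpha^{n}=(1-\theta S)\,e_\alpha^{n-1}$.

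The key preliminary step is to show that $S>0$ strictly, which is what makes the division by $S$ legitimate and the threshold well defined. I would do this by writing $a:=\tanh(\sqrt{\nu^{-1}}\alpha)$ and $b:=\tanh(\sqrt{\nu^{-1}}(1-\alpha))$, both strictly positive since their arguments are positive; then $S=(a+b)(a^{-1}+b^{-1})=(a+b)^2/(ab)$. This not only gives $S>0$ but, by the AM--GM inequality, $S\ge 4$ with equality exactly when $a=b$, i.e. $\alpha=\tfrac12$. In particular for $\alpha\ne\tfrac12$ we get $S>4$, so $2\theta_{\text{NN}}^{\star}=2/S<\tfrac12<1$, which confirms that the convergence interval sits safely inside the admissible range $\theta\in(0,1)$ and is consistent with the symmetric value $\rho_{\text{NN}}=|1-4\theta|$ recovered when $\alpha=\tfrac12$.

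With $S>0$ in hand the convergence criterion is immediate: $\rho_{\text{NN}}<1$ is equivalent to $-1<1-\theta S<1$, that is $0<\theta S<2$, which upon dividing by $S$ yields exactly $0<\theta<2\theta_{\text{NN}}^{\star}$, the bound in~\eqref{eq:NN_theta}. For the two-iteration statement I would set $\rho_{\text{NN}}=0$; since $\theta S$ is real this forces $1-\theta S=0$, i.e. $\theta=1/S=\theta_{\text{NN}}^{\star}$, and this condition is clearly also necessary. To justify the wording ``two iterations'', I would note that $\rho_{\text{NN}}=0$ makes $e_\alpha^{1}=0$ after the first update, so that the subproblems~\eqref{eq:NN_e}--\eqref{eq:NN_e_cor} solved at $n=2$ carry vanishing interface data and hence give $e_j^2\equiv 0$.

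There is no genuine obstacle here: once the factor~\eqref{eq:cv_NN} is available, the argument is an elementary one-line inequality on $|1-\theta S|$, entirely parallel to the DN case. The only point deserving a little care is the strict positivity (indeed $\ge 4$) of $S$, which guarantees both that the interval in~\eqref{eq:NN_theta} is nonempty and that dividing through by $S$ is valid; the compact rewriting $S=(a+b)^2/(ab)$ handles this cleanly.
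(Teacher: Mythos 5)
Your proposal is correct and follows essentially the same route as the paper, which simply reads off the condition $|1-\theta S|<1$ from the convergence factor~\eqref{eq:cv_NN} and sets it to zero for the two-step result, mirroring the proof of Theorem~\ref{thm:DN_asym}. Your additional observation that $S=(a+b)^2/(ab)\ge 4$ with equality only at $\alpha=\tfrac12$ is a nice touch that makes the positivity and the nonemptiness of the interval explicit, but it does not change the argument.
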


\begin{proof}
    Following the same steps as in the proof of Theorem~\ref{thm:DN_asym}, we obtain the inequality~\eqref{eq:NN_theta}, and we find directly $\theta_{\text{NN}}^{\star}$ by equating~\eqref{eq:cv_NN} to zero.
\end{proof}

\begin{remark}
    As shown in Theorem~\ref{thm:DN_asym} and in Theorem~\ref{thm:NN_asym}, both the DN and the NN methods converge in two iterations to the exact solution. Moreover, we have a bound for the relaxation parameter $\theta$ of each method for which the convergence of the method is guaranteed.
\end{remark}

The above analysis can also be extended to the two-dimensional case. More precisely, we assume that the domain $\Omega$ is now given by $[0,1]\times[0,1]$, which is then divided into two non-overlapping subdomains $\Omega_1 = (0 , \alpha) \times [0,1]$ and $\Omega_2 = (\alpha, 1) \times  [0,1]$, with the interface at $x_1 = \alpha$ denoted by $\Gamma := \{\alpha\} \times [0,1]$. In addition, we keep the assumption that $\hat y=0$ and $\kappa(x)=1$. The two-dimensional analysis is often carried out by using a Fourier expansion in one direction, in our case, the $x_2$ direction $e_i^n(x_1,x_2)=\sum_{k=0}^{\infty}\hat e_i(x_1,k)\sin(k\pi x_2)$. In this way, the error function related to $e_i(x_1,x_2)$ passes to $\hat e_i(x_1,k)$, and for instance, in the DN case is governed by
\begin{equation}\label{eq:DN2}
    \begin{aligned}
        \partial_{x_1x_1} \hat e_1^n - \frac{\nu k^2\pi^2+1}{\nu} \hat e_1^n &= 0, \quad \hat e_1^n(0,k)=0, \quad \hat e_1^n(\alpha,k) = \hat e_{\alpha}^{n-1}, \\
        \partial_{x_1x_1} \hat e_2^n - \frac{\nu k^2\pi^2+1}{\nu} \hat e_2^n &= 0, \quad \hat e_2^n(1,k)=0, \quad \partial_{x_1} \hat e_2^n(\alpha,k) = \partial_{x_1} \hat e_1^n(\alpha,k),
    \end{aligned}
\end{equation}
with $\hat e_{\alpha}^n:=(1-\theta)\hat e_{\alpha}^{n-1} + \theta \hat e_2^n(\alpha,k)$ and $\theta\in(0,1)$. We observe that~\eqref{eq:DN2} has the same structure as in the one-dimensional case~\eqref{eq:DN}, where $\nu^{-1}$ is replaced by $\frac{\nu k^2\pi^2+1}{\nu}$. Therefore, the same type of reasoning can be applied to analyze this iteration, and we have the following results.
\begin{theorem}
    For symmetric subdomains (i.e., $\alpha=\frac12$), the convergence of the DN and the NN methods for Problem~\eqref{eq:opt_sys_red2} are both linear and independent of the value of $\nu$. It converges in two iterations if $\theta=\frac12$ for the DN method and $\theta=\frac14$ for the NN method.
\end{theorem}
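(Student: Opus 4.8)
The plan is to exploit the structural identity already recorded in the excerpt: after the Fourier sine expansion $e_i^n(x_1,x_2)=\sum_{k=0}^{\infty}\hat e_i^n(x_1,k)\sin(k\pi x_2)$, the two-dimensional DN iteration decouples into the family of one-dimensional iterations \eqref{eq:DN2}, one for each mode $k$, in which the role of $\nu^{-1}$ is played by $\sigma_k:=\frac{\nu k^2\pi^2+1}{\nu}$. First I would note that the NN system \eqref{eq:NN_e}--\eqref{eq:NN_e_cor} transforms under the same expansion into the identical one-dimensional iteration with $\nu^{-1}$ replaced by $\sigma_k$, since the functions $\sin(k\pi x_2)$ are eigenfunctions of $\partial_{x_2x_2}$ compatible with the homogeneous Dirichlet data on the top and bottom edges. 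Consequently the mode-$k$ convergence factors are obtained from \eqref{eq:cv_DN} and \eqref{eq:cv_NN} by the single substitution $\sqrt{\nu^{-1}}\mapsto\sqrt{\sigma_k}$.

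Next I would set $\alpha=\frac12$ and simplify. Because $\tanh(z)\coth(z)=1$ for every $z$, the DN factor collapses to $\rho_{\text{DN}}(k)=|1-2\theta|$, and, using $\tanh+\tanh=2\tanh$ together with $\coth+\coth=2\coth$, the NN factor collapses to $\rho_{\text{NN}}(k)=|1-4\theta|$. The decisive point is that after this cancellation the argument $\sqrt{\sigma_k}$ disappears entirely, so both factors are independent of the mode index $k$ and of $\nu$. Since every Fourier mode therefore contracts by exactly the same factor, superposition over the modes (with Parseval applied to the interface trace) shows that the full error decays by this common factor at each iteration, which establishes linear, $\nu$-independent convergence.

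Finally, for the two-step statement I would choose $\theta$ so that the common factor vanishes: $\theta=\frac12$ for DN and $\theta=\frac14$ for NN force $\rho\equiv 0$. Then the updated interface trace satisfies $\hat e_{\alpha}^1(k)=\rho(k)\,\hat e_{\alpha}^0(k)=0$ for all $k$ after the first sweep, and the subdomain solves of the second iteration, now driven by homogeneous interface data, return the zero error throughout $\Omega$.

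The main obstacle --- really the one point that must be verified rather than merely inherited from the one-dimensional symmetric results --- is the $k$-independence of the convergence factor. It is exactly the $\tanh\cdot\coth$ cancellation available only at $\alpha=\frac12$ that renders the factor blind to $\sqrt{\sigma_k}$; for asymmetric decompositions the product $\tanh(\sqrt{\sigma_k}(1-\alpha))\coth(\sqrt{\sigma_k}\alpha)$ genuinely depends on $k$, and one would then have to take a supremum over the modes and argue that it is attained, which is precisely what prevents a clean $\nu$-independent rate in the general case.
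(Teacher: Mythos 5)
Your proposal is correct and follows essentially the same route as the paper: the paper reduces the two-dimensional iteration to the one-dimensional one via the Fourier sine expansion with $\nu^{-1}$ replaced by $\frac{\nu k^2\pi^2+1}{\nu}$, and then (as in its one-dimensional proofs) substitutes $\alpha=\frac12$ into the convergence factors to get $|1-2\theta|$ and $|1-4\theta|$, which vanish at $\theta=\frac12$ and $\theta=\frac14$. Your added emphasis on the $\tanh\cdot\coth$ cancellation being what makes the factor independent of the mode $k$ (and hence of $\nu$) is exactly the point the paper leaves implicit, so the proposal is a slightly more detailed version of the same argument.
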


\begin{theorem}
    For asymmetric subdomains (i.e., $\alpha\neq\frac12$), the DN method converges for Problem~\eqref{eq:opt_sys_red2} whenever
    \begin{equation}\label{eq:cv_DN2d}
        \rho_{\text{DN2d}}:=\sup_{k\in\N}\left|1-\theta\left[1+\tanh\left(\sqrt{\frac{\nu k^2\pi^2+1}{\nu}} (1-\alpha)\right)\coth\left(\sqrt{\frac{\nu k^2\pi^2+1}{\nu}} \alpha\right)\right]\right|<1.
    \end{equation}
    The NN method converges for Problem~\eqref{eq:opt_sys_red2} whenever
    \begin{equation}\label{eq:cv_NN2d}
        \begin{aligned}
            \rho_{\text{NN2d}}:=\sup_{k\in\N}\Big|1-\theta&\Big( \tanh(\sqrt{\frac{\nu k^2\pi^2+1}{\nu}} \alpha) + \tanh\left(\sqrt{\frac{\nu k^2\pi^2+1}{\nu}} (1-\alpha)\right) \Big) \\
            &\cdot \Big( \coth(\sqrt{\frac{\nu k^2\pi^2+1}{\nu}} \alpha) + \coth(\sqrt{\frac{\nu k^2\pi^2+1}{\nu}} (1-\alpha)) \Big)\Big|<1.
        \end{aligned}
    \end{equation}
\end{theorem}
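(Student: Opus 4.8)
The plan is to exploit the fact that the Fourier expansion in the $x_2$ direction decouples the two-dimensional iteration into a family of mutually independent one-dimensional iterations, one for each frequency $k$. First I would note that, for each fixed $k$, the transformed error equations~\eqref{eq:DN2} are structurally identical to the one-dimensional equations~\eqref{eq:DN}, the only difference being that the coefficient $\nu^{-1}$ is replaced by $\mu_k:=\frac{\nu k^2\pi^2+1}{\nu}>0$. Since the derivation of the one-dimensional convergence factor~\eqref{eq:cv_DN} used nothing beyond solving $\partial_{xx}e-\lambda e=0$ with $\lambda>0$ under the prescribed boundary and transmission conditions, that computation carries over verbatim with $\lambda=\mu_k$. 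Hence the $k$-th interface coefficient satisfies $\hat e_\alpha^n(k)=\sigma_k\,\hat e_\alpha^{n-1}(k)$, where $|\sigma_k|$ is precisely the $k$-th term under the supremum in~\eqref{eq:cv_DN2d}; moreover, the subdomain errors $\hat e_j^n$ depend linearly on $\hat e_\alpha^{n-1}(k)$ (cf.~\eqref{eq:e_bis} for the NN method), so that contraction of the interface data controls the entire iteration.

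Next I would reassemble the modes. Because the functions $\sin(k\pi x_2)$ are orthogonal on $[0,1]$, Parseval's identity expresses $\|e_\alpha^n\|_{L^2(\Gamma)}^2$ as the (weighted) sum of the $|\hat e_\alpha^n(k)|^2$, and each summand contracts by its own factor $|\sigma_k|$. Bounding every $|\sigma_k|$ by $\rho_{\text{DN2d}}=\sup_{k}|\sigma_k|$ yields $\|e_\alpha^n\|_{L^2(\Gamma)}\le\rho_{\text{DN2d}}\,\|e_\alpha^{n-1}\|_{L^2(\Gamma)}$, so the iteration is a contraction, and therefore convergent, as soon as $\rho_{\text{DN2d}}<1$. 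The Neumann--Neumann case is handled identically: the correction step~\eqref{eq:NN_e_cor} and the update of $e_\alpha^n$ also decouple across modes, and substituting $\nu^{-1}\mapsto\mu_k$ in~\eqref{eq:cv_NN} produces the $k$-th factor whose supremum is the quantity $\rho_{\text{NN2d}}$ in~\eqref{eq:cv_NN2d}.

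The step I expect to require the most care is showing that the supremum over the infinite frequency set is genuinely finite and, for admissible $\theta$, strictly below one, rather than merely a pointwise bound. Here I would use that $|\sigma_k|$ is a continuous function of $\mu_k$, which increases to $+\infty$ with $k$, so it suffices to understand the range swept by the product $g(s):=\tanh(s(1-\alpha))\coth(s\alpha)$ with $s=\sqrt{\mu_k}$. Since $g(s)\to1$ as $s\to\infty$, the high-frequency limit of the DN factor is $|1-2\theta|$, matching the symmetric-subdomain value, while the lowest admissible frequency furnishes the other extreme of $g$. Because $|1-\theta(1+g)|$ is a V-shaped function of $g$ and $g$ is monotone in $s$, the supremum is attained at one of these two ends, so the criterion $\rho_{\text{DN2d}}<1$ reduces to a bound on $\theta$ taken uniformly in $k$, in the spirit of~\eqref{eq:DN_theta}; the same reasoning applied to~\eqref{eq:cv_NN} settles the NN case.
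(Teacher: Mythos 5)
Your proposal is correct and follows exactly the route the paper intends: the paper states this theorem without a written proof, relying on the preceding observation that the Fourier-transformed equations~\eqref{eq:DN2} reproduce the one-dimensional iteration with $\nu^{-1}$ replaced by $\frac{\nu k^2\pi^2+1}{\nu}$, so the per-mode convergence factors are those of~\eqref{eq:cv_DN} and~\eqref{eq:cv_NN} and convergence holds when their supremum over $k$ is below one. Your additional details (Parseval to pass from mode-wise to $L^2(\Gamma)$ contraction, and the monotonicity of $\tanh(s(1-\alpha))\coth(s\alpha)$ in $s$ to locate the supremum at $k=0$ or $k\to\infty$, consistent with the equioscillation remark in Section~\ref{sec:4}) only make the argument more complete than what the paper records.
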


\section{Numerical experiments}\label{sec:4}
In this section, we provide numerical experiments to illustrate the convergence rate of the DN and the NN methods for Problem~\eqref{eq:poisson}-\eqref{eq:J} with $\nu=1$ and $\hat y=0$. Figure~\ref{fig:1} (top) shows the one-dimensional convergence behaviour of these two methods for different choices of $\theta$ with an asymmetric decomposition $\alpha=\frac13$. The best choices of the relaxation parameter are given by $\theta_{\text{DN}}^{\star}\approx0.355$ and $\theta_{\text{NN}}^{\star}\approx0.229$. In particular, we observe some divergence behavior in the case of the NN method for $\theta=0.5$ and $\theta=0.7$. Indeed, this corresponds to the result in Theorem~\ref{thm:NN_asym}, since these two values are greater than $2\theta_{\text{NN}}^{\star}$ which is the upper bound for the relaxation parameter $\theta$. Furthermore, we observe the convergence to the exact solution in two iterations for a non-symmetric domain decomposition, whereas a three-step convergence is needed for the $L^2$ regularization~\cite{Gander2018}. Figure~\ref{fig:1} (bottom) presents the behavior of the convergence factors~\eqref{eq:cv_DN2d} and~\eqref{eq:cv_NN2d} in the two-dimensional case. The interface here is chosen to be asymmetric $\Gamma=\{\frac13\}\times[0,1]$. We observe good convergence behaviors for some tested relaxation parameters $\theta$. Furthermore, the NN method does not converge for $\theta=0.5$ and $\theta=0.7$ as in the one-dimensional case. We obtain that $\rho_{\text{DN2d}}\approx0.173$ for $\theta_{\text{DN2d}}^{\star}\approx0.414$ and $\rho_{\text{NN2d}}\approx0.046$ for $\theta_{\text{NN2d}}^{\star}\approx0.239$. These two optimal relaxation parameters can also be found by equioscillating the value of the convergence factor both at $k=0$ and $k\rightarrow \infty$. Moreover for each method, we find that these optimal relaxation parameters stay very close between the one-dimensional and the two-dimensional case.
\begin{figure}
    \centering
    \includegraphics[scale=0.12]{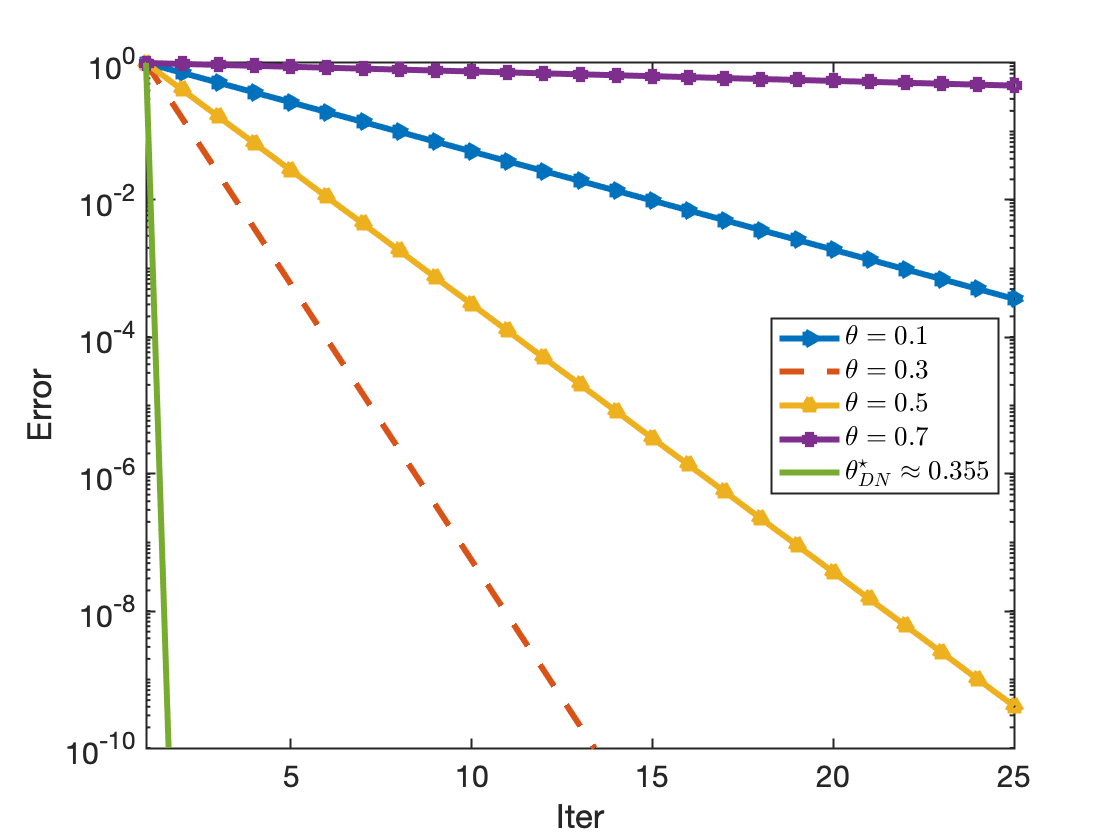}
    \includegraphics[scale=0.12]{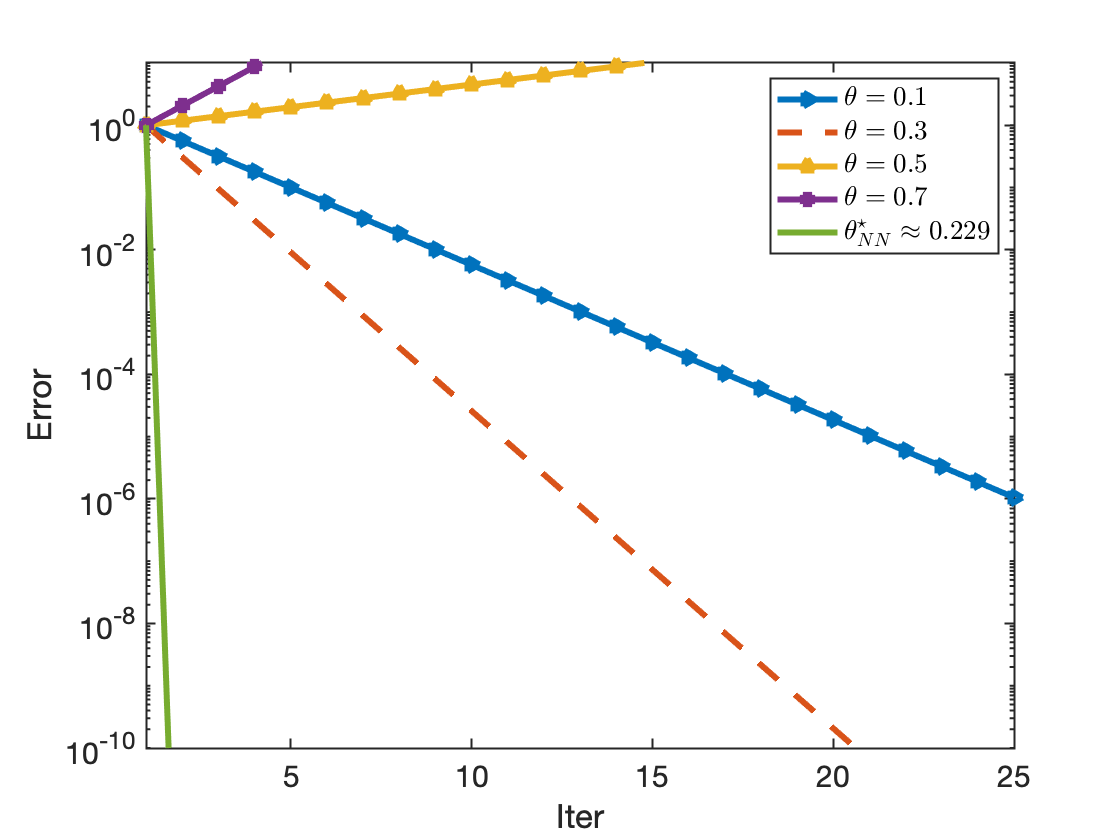}
    \includegraphics[scale=0.12]{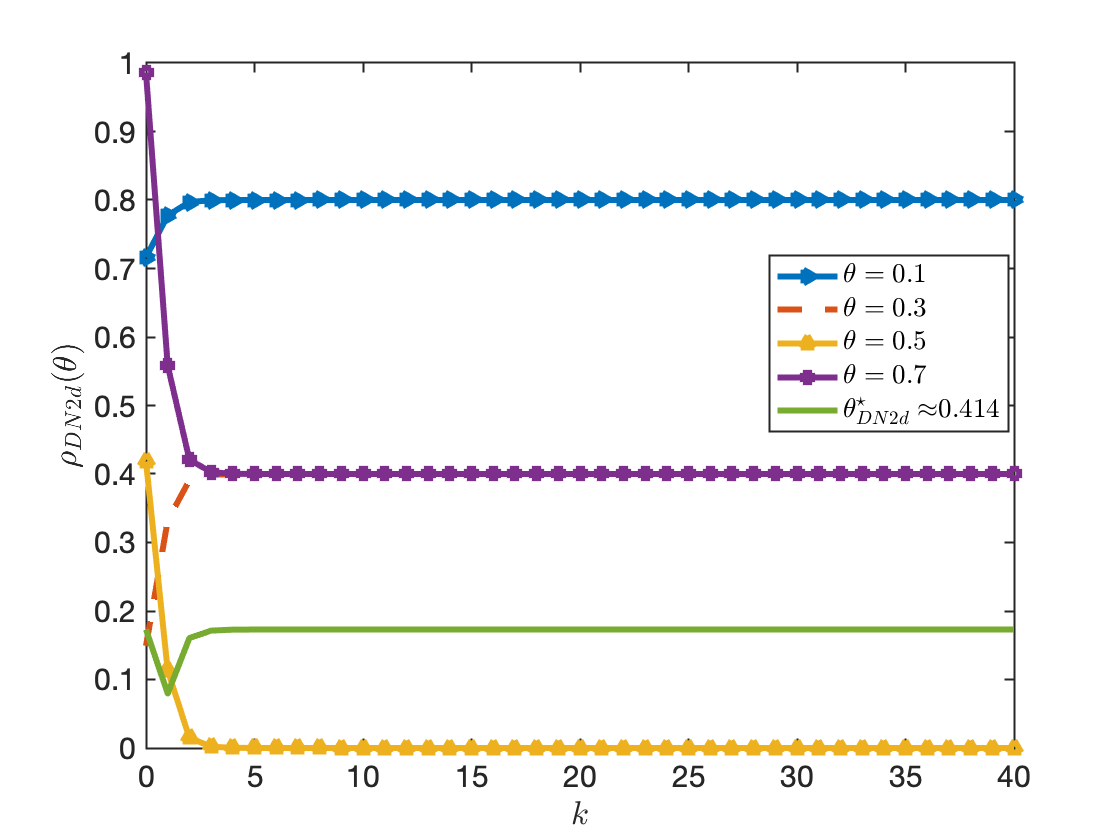}
    \includegraphics[scale=0.12]{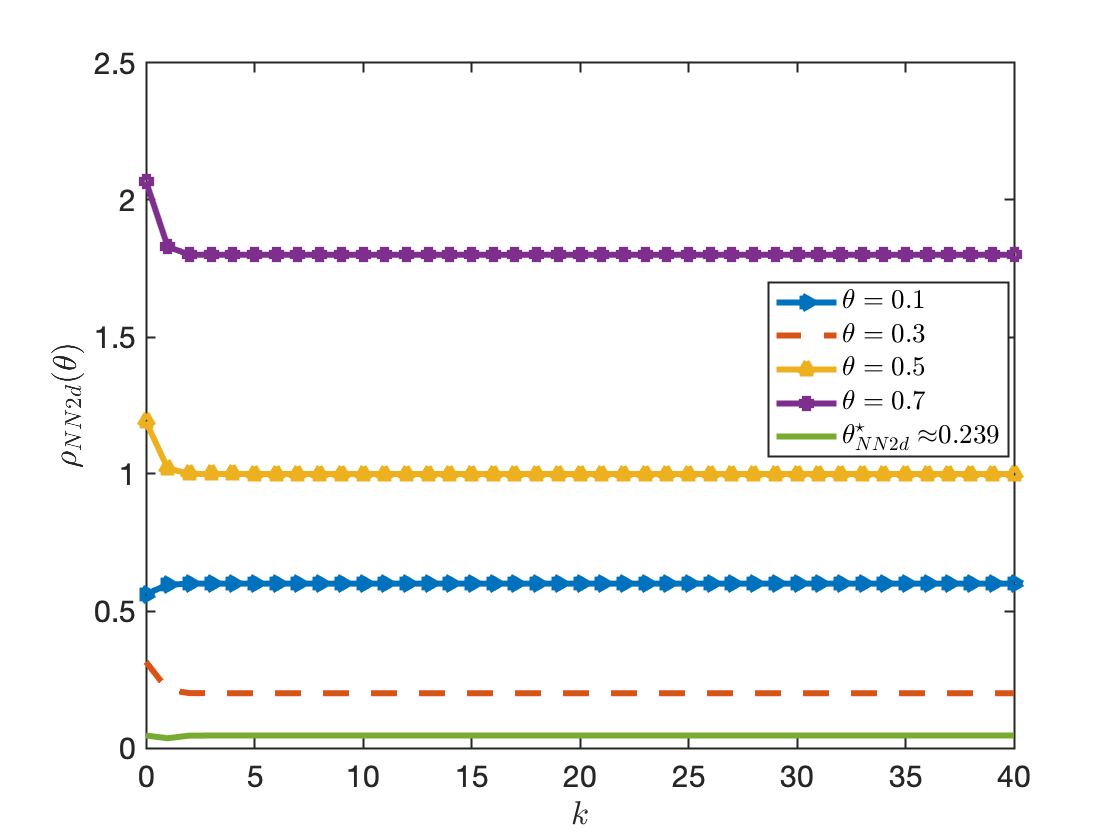}
    \caption{Error decay in 1D w.r.t. the number of iterations for the DN method (top-left) and the NN method (top-right) with the interface at $\alpha=\frac13$. Convergence factors~\eqref{eq:cv_DN2d} and~\eqref{eq:cv_NN2d} in 2D w.r.t. the value of $k\in[0,40]$ for the DN method (bottom-left) and the NN method (bottom-right) with the interface at $\Gamma=\{\frac13\}\times[0,1]$.}
    \label{fig:1}
\end{figure}

To conclude, we presented a convergence analysis of the DN and the NN methods for elliptic optimal control problems using the energy norm for regularization. Only one Poisson type equation needs to be solved, whereas a biharmonic type equation is required for $L^2$ regularization. Under the energy norm, we found similar results in the symmetric case as for the Poisson problem. Therefore, we can expect similar convergence behavior for many subdomains as presented in~\cite{Chaouqui2018}. Furthermore, explicit formulations along with an upper bound are also given for the optimal relaxation parameters with a non-symmetric decomposition, for which the methods converge still in two iterations in the one-dimensional case.
\bibliography{biblio}
\bibliographystyle{plain}
\end{document}